\documentclass[12pt,twoside]{article} 
\usepackage{amsmath, amsthm,amssymb, amsfonts, bm} 
\usepackage{authblk} 
\usepackage[colorlinks=true,
            linkcolor=blue,     
            citecolor=red,      
            urlcolor=magenta    
           ]{hyperref}
\usepackage{fancyhdr}
\usepackage[a4paper,margin=1in,includeheadfoot]{geometry}

\hypersetup{
    pdftitle={The numerical radius of fractional powers of matrices}, 
    pdfauthor={Eman Aldabbas, Mohammad Sababheh },
    colorlinks=true,
    linkcolor=blue,
    urlcolor=blue
}
\newtheorem{theorem}{Theorem}[section]
\newtheorem{lemma}[theorem]{Lemma}

\newtheorem{remark}[theorem]{Remark}

\newtheorem{corollary}[theorem]{Corollary}
\newtheorem{proposition}[theorem]{Proposition}
\numberwithin{equation}{section}
\newtheorem{conjecture}[theorem]{Conjecture}
\providecommand{\keywords}[1]{\small \textbf{\textit{Keywords---}} #1}
\providecommand{\MSC}[1]{\small \textbf{\textit{MSC Classification 2020---}} #1}

\pagestyle{fancy}
\fancyhf{} 
\fancyhead[RO]{\small The numerical radius of fractional powers of matrices}
\fancyhead[LE]{\small Aldabbas, Sababheh}
\fancyfoot[C]{\thepage}

\title{The numerical radius of fractional powers of matrices}

\author[1]{Eman Aldabbas \thanks{\href{e\_aldabbas@ju.edu.jo}{e\_aldabbas@ju.edu.jo, aldabbas@ualberta.ca}}}
\author[2,3]{Mohammad Sababheh \thanks{\href{sababheh@psut.edu.jo}{ sababheh@yahoo.com}}}
\affil[1]{Department of Mathematics, University of Jordan, Amman 11942, Jordan}
\affil[2]{Department of Basic Sciences, Princess Sumaya University for Technology\\ Amman 11941, Jordan}
\affil[3]{Department of Mathematics, Abdullah Al Salem University, Khaldiya, Kuwait}

\date{} 

\begin{document}

\maketitle

\abstract{Using integral representations of the fractional power of matrices, and the geometric intuition of sectorial matrices, we show that for any accretive-dissipative matrix $A$ and  any $t \in (0,1)$, the matrix \(A^t\) is accretive-dissipative, and that 
\[
\omega(A^t)\geq \omega^t(A) ,
\]
where \(\omega(\cdot)\) is the numerical radius. This inequality complements the well-known power inequality $\omega(A^k)\leq \omega^k(A)$, valid for any square matrix and positive integer power $k$. As an application, we prove that if $A$ is accretive, then the above fractional  inequality holds if $0<t<\frac{1}{2}$. Other consequences will be given too.}


\keywords{Accretive matrix, accretive-dissipative matrix, numerical radius, power inequality.}


\MSC{47A12, 15A60, 47B44}

\maketitle

\section{Introduction}
Throughout this paper, we use upper-case letters to denote square matrices of appropriate sizes. The zero matrix will be denoted by $O$, while the identity matrix by $I$. The algebra of all $n\times n$ matrices is denoted by $\mathcal{M}_n$. The unit sphere of \(\mathbb{C}^n\) is denoted by \(\mathcal{S}_1(\mathbb{C}^n)\).
For $A \in \mathcal{M}_n$, the numerical radius $\omega(A)$ and the spectral norm $\|A\|$ are defined, respectively, by
\[
\omega(A) = \max \{|\langle Ax, x \rangle| : x \in \mathcal{S}_1(\mathbb{C}^n)\}, 
\]
and
\[
\|A\| = \max \{\|Ax\| : x\in \mathcal{S}_1(\mathbb{C}^n)\}.
\]

The spectral norm is sub-multiplicative, in the sense that $\|AB\|\leq \|A\|\;\|B\|$ for all $A,B\in\mathcal{M}_n$. This immediately implies $\|A^k\|\leq \|A\|^k$ for any $A\in\mathcal{M}_n$ and any positive integer $k$. On the contrary, $\omega(\cdot)$ is not sub-multiplicative, yet it satisfies the power inequality \cite[Theorem 2.1-1]{Gustafson1997}
\begin{equation}\label{Eq_Intro_Power_k}
\omega(A^k)\leq \omega^k(A), A\in\mathcal{M}_n, k\in\mathbb{N},
\end{equation}
where the notation $\omega^k(A)$ refers to $(\omega(A))^k.$ While this has been a key property of the numerical radius,  we find no discussion of \eqref{Eq_Intro_Power_k} when the power is not an integer. It is the main goal of this paper to discuss possible relations between $\omega(A^t)$ and $\omega^t(A)$, when $0<t<1.$ 

To be able to discuss these fractional powers of $A$, we need a guarantee that this is well defined.

Given $A\in\mathcal{M}_n$, the numerical range of \(A\), denoted by $W(A)$, is defined as the image of \(\mathcal{S}_1(\mathbb{C}^n)\) under the quadratic form $x\mapsto \left<Ax,x\right>.$ That is,
\[W(A)=\left\{\left<Ax,x\right>: x\in \mathcal{S}_1(\mathbb{C}^n)\right\}.\]
If $W(A) \subset (0, \infty)$, we say that $A$ is positive definite and simply write $A > O$. It is well known that when $A > O$, then $\omega(A) = \|A\|$.\\
A useful property regarding the origin and the numerical range of \(A\) is the following:
\begin{proposition} \cite{Johnson1972}
    Let $A\in\mathcal{M}_n$ be such that $0\not\in W(A).$ Then there exists $\theta\in\mathbb{R}$ such that 
    \[W\left(e^{i\theta}A\right)\subseteq \{z\in\mathbb{C}: \Re z>0\},\]
    where $\Re z=\frac{z+z^*}{2}$ is the real part of $z$.
\end{proposition}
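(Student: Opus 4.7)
The plan is to reduce the statement to a standard plane separation argument by exploiting two basic features of the numerical range: its convexity (Toeplitz--Hausdorff) and its covariance under scalar multiplication, namely $W(\lambda A)=\lambda W(A)$ for any $\lambda\in\mathbb{C}$.

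First, I would record that $W(A)$ is a compact convex subset of $\mathbb{C}$. Compactness follows since $\mathcal{S}_1(\mathbb{C}^n)$ is compact and $x\mapsto\langle Ax,x\rangle$ is continuous, while convexity is the Toeplitz--Hausdorff theorem. Then, because $0\notin W(A)$ by hypothesis and $W(A)$ is a closed convex set not containing the origin, a standard hyperplane separation argument in $\mathbb{R}^2$ (or a direct use of the point in $W(A)$ nearest to $0$) produces a real line $L$ through the origin such that $W(A)$ lies strictly on one side of $L$.

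Next, I would encode this geometric conclusion analytically. Let $n=e^{-i\theta}$ be a unit complex number perpendicular to $L$ and pointing into the half-plane that contains $W(A)$; then for every $z\in W(A)$ one has $\Re(e^{i\theta}z)>0$, since $\Re(\bar n z)$ is precisely the signed distance of $z$ from $L$ measured in the direction of $n$, and this signed distance is strictly positive on the entire compact set $W(A)$. Finally, using $W(e^{i\theta}A)=e^{i\theta}W(A)$, which is immediate from the definition of the numerical range, I would conclude that $W(e^{i\theta}A)\subseteq\{w\in\mathbb{C}:\Re w>0\}$, as required.

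The only genuinely nontrivial ingredient here is the Toeplitz--Hausdorff convexity theorem, which I would simply cite; everything else is elementary planar geometry. The main thing to be careful about is that the separation is \emph{strict} (open half-plane, not closed). This is guaranteed because $W(A)$ is compact and the origin is not in it, so the distance from $0$ to $W(A)$ is strictly positive, and the supporting line perpendicular to the nearest-point direction can be shifted infinitesimally toward the origin without touching $W(A)$. No further subtlety is expected.
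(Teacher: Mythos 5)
Your argument is correct and complete: compactness plus Toeplitz--Hausdorff convexity of $W(A)$, strict separation of the origin from this compact convex set (e.g.\ via the nearest point $p\in W(A)$, which gives $\Re(\bar p\,z)\ge |p|^2>0$ for every $z\in W(A)$), and the covariance $W(e^{i\theta}A)=e^{i\theta}W(A)$ together yield the claim, including the required strictness. The paper gives no proof of this proposition---it is quoted from Johnson's thesis---and your separation argument is precisely the standard one underlying that citation, so your write-up simply supplies the details the paper delegates to the reference.
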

This motivates the definition of accretive matrices as those with numerical range in the open right half-plane. That is, we say that $A\in\mathcal{M}_n$ is accretive if \(W(A)\subseteq \{z\in\mathbb{C}: \Re z> 0\}\). The set of all accretive matrices in $\mathcal{M}_n$ is denoted by \(\Gamma_n\).  Using the Toeplitz decomposition, any matrix \(A\in \mathcal{M}_n\) can be written as \(A=H+iK\), where \[H=\Re(A)=\dfrac{A+A^*}{2} \text{ and } K=\Im(A)= \dfrac{A-A^*}{2i}.\] 
In this form, \(A\in\Gamma_n \Leftrightarrow H > O\). Moreover, \(A\) is called dissipative if \(\Im(A)>O\). If both \(\Re(A)\) and \(\Im(A)\) are positive definite (that is, \( H, K > O \)), then \( A \) is called accretive-dissipative. \\
Speaking of accretive matrices, we can describe sectorial matrices by those matrices whose numerical range lies in a certain sector.  A matrix \( A \in \mathcal{M}_n \) is called sectorial if its numerical range lies within a sector of the complex plane in the form
\[
S_\alpha := \{ z \in \mathbb{C} : |\Im z| \leq \tan \alpha \, \Re z \}
\]
for some angle \( 0 \leq \alpha < \frac{\pi}{2} \). 
 We can easily verify that \cite{Bedrani2022}:
 \[A\in\Gamma_n \Leftrightarrow W(A)\subset S_{\alpha}\;for\;some\;\alpha\in \left[0,\frac{\pi}{2}\right).\]
 
Some foundational work on the class of accretive matrices can be found in \cite{Johnson1972,Johnson1975,Kato1961_1,Kato1961_2,Kato1962}, making a substantial contribution to the field. Recently, the class of accretive matrices has gained a great deal of attention. We refer the reader to \cite{Aldabbas2025_1,Alemeh2025,Bedrani2021_1,Bedrani2021_2,Bedrani2021_3,Drury2015,Drury2014,Furuichi2024,Raissouli2017,Sababheh2024,Zhang2015} as  a list of such references. 

Now, to begin our investigation of $\omega(A^t)$, we recall the definition of such powers. Let \(A\in \mathcal{M}_n\) be such that \(W(A)\cap (-\infty,0]=\varnothing\), and let \(\Omega_A\) denote a contour in the resolvent  of \(A\) that winds once about each eigenvalue of $A$ and avoids \((-\infty,0]\). Then for any \(t\in(0,1)\), the prinicpal fractional power \(A^t\) can be defined via the Dunford integral as follows:
\begin{equation}\label{Dinford Integral}
A^t= \dfrac{1}{2\pi i}\displaystyle\int_{\Omega_A}\, z^t \Big(zI-A\Big)^{-1}\, dz,
\end{equation}
which coincides with the following formula (see \cite{Kato1961_1,Choi2019}): 
\begin{equation}\label{fractional power-definition}
A^t= \dfrac{\sin(t\pi)}{\pi} \displaystyle\int_{0}^{\infty}\, s^{t-1} A (sI+A)^{-1}\, ds.
\end{equation} 
It is important to remember that in \eqref{Dinford Integral}, and in what follows, $z^t$ refers to the principal power of $z$, and so is $A^t$.
One can use \eqref{fractional power-definition} to prove that if $A$ is accretive, then so is \(A^t\), for any \(t \in (0,1)\); see \cite[Lemma A6]{Kato1961_1}. An alternative proof was later given in \cite[Theorem 2.3]{Drury2015}. 

Before proceeding further, we state the following simple fractional consequence of \eqref{Eq_Intro_Power_k}.
\begin{proposition}\label{Prop_reci}
Let $A\in\Gamma_n$, and let $k\in\mathbb{N}.$ Then
\[\omega\left(A^{\frac{1}{k}}\right)\geq \omega^{\frac{1}{k}}(A).\]
\end{proposition}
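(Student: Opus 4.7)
The plan is to reduce the fractional statement directly to the integer power inequality \eqref{Eq_Intro_Power_k} by taking $B = A^{1/k}$ as the base matrix and raising it to the integer power $k$. Three ingredients need to be put in place for this reduction to work: (i) $A^{1/k}$ is well defined, (ii) the composition law $(A^{1/k})^k = A$ holds for the principal fractional power, and (iii) the integer power inequality \eqref{Eq_Intro_Power_k} applies to $A^{1/k}$.

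Concretely, I would proceed as follows. First I would observe that since $A \in \Gamma_n$, we have $W(A) \subseteq \{z \in \mathbb{C} : \Re z > 0\}$, and in particular $W(A) \cap (-\infty,0] = \varnothing$; hence $A^{1/k}$ is well defined via the Dunford integral \eqref{Dinford Integral} with $t = 1/k$. Second, since $z \mapsto z^{1/k}$ maps the right half-plane into itself and $(z^{1/k})^k = z$ there, the Riesz--Dunford functional calculus gives the identity $(A^{1/k})^k = A$; alternatively, the integral representation \eqref{fractional power-definition} with $t = 1/k$ combined with Kato's semigroup property for fractional powers of accretive operators yields the same conclusion. Third, the result of Kato \cite[Lemma A6]{Kato1961_1}, recalled just before the proposition, guarantees that $A^{1/k}$ is again accretive; in particular, $A^{1/k}$ is a bona fide matrix in $\mathcal{M}_n$ to which the classical power inequality applies.

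With these in hand, I would set $B = A^{1/k}$ and apply \eqref{Eq_Intro_Power_k} to obtain
\[
\omega(A) \;=\; \omega(B^k) \;\leq\; \omega^k(B) \;=\; \omega^k\!\left(A^{1/k}\right).
\]
Taking the $k$-th root of both sides yields the desired inequality $\omega^{1/k}(A) \leq \omega(A^{1/k})$.

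I do not expect a genuine obstacle in this proposition: the only point that requires care is the clean justification of $(A^{1/k})^k = A$ for the principal branch, which is essentially a consequence of functional calculus once one has verified that the spectrum (and indeed the numerical range) of $A$ lies in the domain of holomorphy of $z^{1/k}$. The deeper content lies in the general fractional statement announced in the abstract; the present proposition is the easy corollary that motivates it by handling reciprocal-integer exponents.
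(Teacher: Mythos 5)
Your argument is correct and is essentially the same as the paper's: both set $B=A^{1/k}$, use accretivity of $A$ to justify that $A^{1/k}$ is well defined (and accretive), and then apply the integer power inequality \eqref{Eq_Intro_Power_k} to $B$ to get $\omega(A)=\omega\big((A^{1/k})^k\big)\leq \omega^k(A^{1/k})$, followed by taking $k$-th roots. Your extra remarks justifying $(A^{1/k})^k=A$ via the Riesz--Dunford calculus only make explicit what the paper leaves implicit.
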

\begin{proof}
Since $A$ is accretive, $A^{\frac{1}{k}}$ is a well-defined accretive matrix. Applying \eqref{Eq_Intro_Power_k} implies
\begin{align*}
\omega(A)=\omega\left(\left(A^{\frac{1}{k}}\right)^k\right)\leq \omega^k\left(A^{\frac{1}{k}}\right),
\end{align*}
which is equivalent to the desired statement.
\end{proof}
What Proposition \ref{Prop_reci} says is that \eqref{Eq_Intro_Power_k} is reversed if the power is the reciprocal of a natural number. Indeed, our simulations suggest that this is the case for any fractional power $t\in(0,1)$. That is, we have the following conjecture.

\begin{conjecture}\label{Conj_Main_1}
Let $A\in\Gamma_n$ and $0<t<1$. Then
\[\omega(A^t)\geq \omega^t(A).\]
\end{conjecture}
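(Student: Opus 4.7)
The natural strategy is to leverage the integral representation \eqref{fractional power-definition} and pair it with the corresponding scalar identity. Pick a unit vector $x_0$ that achieves $\omega(A)=|\langle A x_0, x_0\rangle|$, and write $\mu=\langle A x_0, x_0\rangle$. Since $A$ is accretive, $\Re\mu>0$, so $\mu$ lies in the domain of the principal branch of $z^t$ and satisfies
\[
\mu^t \;=\; \frac{\sin(t\pi)}{\pi}\int_0^\infty s^{t-1}\,\frac{\mu}{s+\mu}\,ds.
\]
The conjecture then reduces to showing $|\langle A^t x_0, x_0\rangle|\geq |\mu|^t$, which I would attempt by comparing the integrand $\langle A(sI+A)^{-1}x_0, x_0\rangle$ with $\mu/(s+\mu)$ for each $s>0$ and integrating.

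For the pointwise step, the identity $A(sI+A)^{-1}=I-s(sI+A)^{-1}$ translates the task into comparing the resolvent quantity $\langle (sI+A)^{-1}x_0, x_0\rangle$ with $1/(s+\mu)$. For normal $A$, Jensen's inequality applied to the operator-convex function $z\mapsto (s+z)^{-1}$ (after a rotation that makes $\mu$ real and positive) provides a clean comparison with equality precisely when $x_0$ is a top eigenvector, recovering the trivial equality $\omega(A^t)=\omega(A)^t$ of the normal case. The content of the conjecture is the assertion that for non-normal accretive $A$, the deviation of $\langle (sI+A)^{-1}x_0, x_0\rangle$ from $1/(s+\mu)$ carries a complex phase that, after integration against $s^{t-1}$, pushes $|\langle A^t x_0, x_0\rangle|$ above $|\mu|^t$ rather than below.

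To access this phase I would use the structural fact that, because $|\mu|=\omega(A)$, the vector $x_0$ is a top eigenvector of the Hermitian matrix $\Re(e^{-i\arg\mu}A)$ for its maximal eigenvalue $|\mu|$. Rotating so that $\mu$ is real positive and decomposing $\mathbb{C}^n=\operatorname{span}(x_0)\oplus x_0^\perp$, one can write $\langle (sI+A)^{-1}x_0, x_0\rangle$ explicitly via a Schur complement; the departure from $1/(s+\mu)$ is then a bilinear form in the off-diagonal block of $A$, and the goal is to show that its $s^{t-1}$-weighted integral has argument compatible with $\mu^t$ and strictly increases the modulus. A useful reality check is the $2\times 2$ nilpotent perturbation $A=aI+bN$, where both sides are in closed form and the inequality collapses to the Bernoulli inequality $(1+u)^t\leq 1+tu$.

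The principal obstacle is exactly this phase control under integration. For accretive $A$ whose sector half-angle $\alpha$ is close to $\pi/2$ and $t$ close to $1$, the argument of the correction can vary substantially with $s$, so coarse modulus bounds lose the required sign. This is why the main theorem of the excerpt fully resolves only the accretive-dissipative case and the accretive case with $t\in(0,\tfrac12)$: a preliminary rotation by $e^{i\phi}$ can shrink the effective sector so that the previously proved case applies, but the reduction works only when $t\alpha<\pi/4$. Uniform treatment of $t\in[\tfrac12,1)$ on general accretive $A$ appears to require a new ingredient beyond the integral representation, perhaps a carefully chosen test vector other than $x_0$, built from $x_0$ together with the spectral data of $\Im A$, that captures the non-normal contribution directly; producing such a vector in full generality is where I expect the difficulty to concentrate.
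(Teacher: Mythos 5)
You are addressing a statement that the paper itself records as a \emph{conjecture}: the authors state explicitly that a proof is not available and offer only partial results, namely Theorem \ref{w(Fractional A)} (accretive-dissipative $A$, all $t\in(0,1)$) and Corollary \ref{cor_accretive_1/2} (accretive $A$, $0<t<\tfrac{1}{2}$). Your text is likewise not a proof, and the missing piece is exactly the analytic core. You reduce the claim to the statement $|\langle A^t x_0,x_0\rangle|\geq|\langle Ax_0,x_0\rangle|^t$ for the fixed maximizing vector $x_0$ of $A$; this is \emph{stronger} than the conjecture (the maximizer of $|\langle A^tx,x\rangle|$ need not be $x_0$), it is not implied by it, and you give no argument that it holds for non-normal accretive $A$. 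The subsequent step --- that the $s^{t-1}$-weighted integral of the deviation of $\langle A(sI+A)^{-1}x_0,x_0\rangle$ from $\mu/(s+\mu)$ carries a phase that \emph{increases} the modulus --- is precisely the assertion to be proved, and you concede that you cannot control it when the sector half-angle is close to $\pi/2$ and $t$ is close to $1$. So, judged as a proof of the conjecture, there is a genuine gap; your normal-case and $aI+bN$ (Bernoulli) checks are correct but are only consistency tests.

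For comparison, the paper's partial results avoid the fixed-test-vector route entirely. For accretive-dissipative $A$ one has $\gamma=Arg\langle Ax_0,x_0\rangle\in(0,\pi/2)$, so $e^{-i\gamma}A$ is still accretive; the proof then chains $\omega^t(A)=\|\Re(e^{-i\gamma}A)\|^t=\|\Re^t(e^{-i\gamma}A)\|\leq\|\Re((e^{-i\gamma}A)^t)\|=\|\Re(e^{-it\gamma}A^t)\|\leq\omega(A^t)$, using Proposition \ref{N. rad is attained}, Lemma \ref{Fractioal Power ineq for the norm of A}, the operator inequality $\Re^t(B)\leq\Re(B^t)$ of Lemma \ref{Fractional Power ineq for R(A)}, the rotation identity of Proposition \ref{Fractional Power of the rotational Matrix}, and Lemma \ref{w_theta_ident}. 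The case $0<t<\tfrac{1}{2}$ then holds for \emph{every} accretive $A$, regardless of its sector half-angle $\alpha$: one passes to $A^{1/2}$ (whose numerical range lies in $S_{\pi/4}$ by Lemma \ref{accretivity of A^t}), rotates by $e^{i\pi/4}$ to obtain an accretive-dissipative matrix, applies Theorem \ref{w(Fractional A)} with exponent $2t$, and finishes with Proposition \ref{Prop_reci}. In particular your stated restriction ``$t\alpha<\pi/4$'' is not the paper's; the range genuinely left open is $\tfrac{1}{2}\leq t<1$ for general accretive $A$, and closing it would require a new idea of the kind you gesture at but do not supply.
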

It is the sole goal of this paper to discuss this conjecture.

If \(A >O\), then clearly \(A^t>O\) for any \(t\in [0,1]\) and hence,
\begin{equation}\label{fractional power eq for positive matrices}
\omega(A^t)= \|A^t\|= \|A\|^t= \omega^t(A).    
\end{equation}
Therefore, the discussion of $\omega(A^t)$ when $A>O$ is trivial. 
While a proof of Conjecture \ref{Conj_Main_1} is not available yet, we will present some progress as follows. First, we will prove that for any accretive-dissipative matrix \(A\in\mathcal{M}_n\) and any \(t\in (0,1)\), the matrix \(A^t\) is indeed accretive-dissipative. This will be used to prove that Conjecture \ref{Conj_Main_1} is true if $A$ is accretive-dissipative. After that, we use this together with Proposition \ref{Prop_reci} to prove that Conjecture \ref{Conj_Main_1} is true if $0<t<\frac{1}{2}.$

The organization of this paper is as follows. In Section 2, we present some preliminary results and lemmas that are essential to prove our results. Section 3 contains the main results, where we study the behavior of the fractional powers of accretive-dissipative matrices and extend identity \eqref{fractional power eq for positive matrices} to this setting.
\section{Auxiliary Results}
In this section, we state several lemmas that are needed to prove our results.\\

We begin by presenting the following well-known result, which is a direct consequence of the spectral decomposition of Hermitian matrices.
\begin{lemma}\label{Fractioal Power ineq for the norm of A}
 Let \(A>O\) and let \(t\in [0,1]\). Then \[\Vert A\Vert^t= \Vert A^t\Vert.\]
\end{lemma}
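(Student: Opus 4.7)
The plan is to leverage the spectral theorem for Hermitian matrices, since $A>O$ forces $A$ to be Hermitian with strictly positive eigenvalues. First I would write $A=UDU^{*}$ where $U$ is unitary and $D=\mathrm{diag}(\lambda_1,\dots,\lambda_n)$ with each $\lambda_i>0$. Because the spectrum of $A$ is contained in $(0,\infty)$, the principal fractional power defined via \eqref{Dinford Integral} coincides with the natural spectral definition $A^{t}=UD^{t}U^{*}$, where $D^{t}=\mathrm{diag}(\lambda_1^{t},\dots,\lambda_n^{t})$; this is the standard consistency check one gets by evaluating the Dunford integral on a contour that encircles each positive eigenvalue separately.

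Next I would invoke the well-known identification of the spectral norm of a positive definite matrix with its largest eigenvalue: since both $A$ and $A^{t}$ are positive definite (the latter because $t\in[0,1]$ and each $\lambda_i^{t}>0$), we have $\|A\|=\max_i\lambda_i$ and $\|A^{t}\|=\max_i\lambda_i^{t}$. Then I would use monotonicity of the map $x\mapsto x^{t}$ on $(0,\infty)$ for $t\in[0,1]$ to commute the maximum with the $t$-th power, yielding
\[
\|A^{t}\|=\max_i\lambda_i^{t}=\Bigl(\max_i\lambda_i\Bigr)^{t}=\|A\|^{t},
\]
which is the desired identity. The edge cases $t=0$ and $t=1$ are trivial: $A^{0}=I$ with $\|I\|=1=\|A\|^{0}$, and $t=1$ gives the tautology.

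Honestly, there is no real obstacle here; the lemma is a direct corollary of the spectral theorem combined with the elementary fact that a monotone scalar function of a positive definite matrix acts eigenvalue-wise. The only point that might deserve a sentence of care is explaining why the Dunford-integral fractional power \eqref{Dinford Integral} agrees with the spectral construction $UD^{t}U^{*}$ when $A$ is positive definite, so that the norm computation genuinely applies to the same object used elsewhere in the paper.
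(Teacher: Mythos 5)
Your proof is correct and matches the paper's approach: the paper gives no detailed argument, simply noting the lemma is ``a direct consequence of the spectral decomposition of Hermitian matrices,'' which is precisely the route you take (diagonalize $A=UDU^{*}$, note $A^{t}=UD^{t}U^{*}$, and use monotonicity of $x\mapsto x^{t}$ to identify $\max_i\lambda_i^{t}=\bigl(\max_i\lambda_i\bigr)^{t}$). Your extra remark on the consistency of the Dunford-integral power with the spectral definition is a sensible addition but not a departure from the paper's intent.
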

Next, we present two lemmas of particular interest. Lemma \ref{accretivity of A^t} was independently proved by Kato and Drury. Lemma \ref{Fractional Power ineq for R(A)}, which is significant in its own right, was established later and, interestingly,  it implies the accretivity of fractional power \(A^t\).

\begin{lemma}(\cite[Lemma A1]{Kato1961_1},\cite[Corollary 2.4]{Drury2015})\label{accretivity of A^t}
    Let \(A\in\Gamma_n\) be  such that \(W(A)\subseteq S_{\alpha}\;for\;some\;\alpha\in \left[0,\frac{\pi}{2}\right)\), and let \(t\in [0,1]\). Then \(W(A^t)\subseteq S_{t\alpha}\).
\end{lemma}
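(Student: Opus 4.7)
Fix a unit vector $x\in\mathcal{S}_1(\mathbb{C}^n)$; the goal is to show that $|\arg\langle A^tx,x\rangle|\le t\alpha$. The starting point is the integral representation \eqref{fractional power-definition}, which after taking inner products becomes
\[
\langle A^tx,x\rangle
=\frac{\sin(t\pi)}{\pi}\int_{0}^{\infty} s^{t-1}\langle A(sI+A)^{-1}x,x\rangle\,ds.
\]
A naive bound on the integrand yields only $\langle A^tx,x\rangle\in S_{\alpha}$, so to obtain the sharper containment in $S_{t\alpha}$ the plan is to rotate the contour of integration.

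Since $\sigma(A)\subseteq\overline{W(A)}\subseteq S_\alpha$, the map $s\mapsto s^{t-1}A(sI+A)^{-1}$ is holomorphic on the complement of $-S_\alpha$ minus the branch cut $(-\infty,0]$ of $s^{t-1}$. Its norm is $O(|s|^{t-1})$ as $s\to 0$ (using invertibility of $A$, which follows from $0\notin\overline{W(A)}$) and $O(|s|^{t-2})$ as $|s|\to\infty$. For $t\in(0,1)$ the corresponding arcs on a pie-slice contour contribute nothing in the limit, so Cauchy's theorem lets one deform the path of integration to the ray $\{re^{i\beta}:r>0\}$ for any angle $\beta$ with $|\beta|<\pi-\alpha$. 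After the substitution $s=re^{i\beta}$ this gives
\[
\langle A^tx,x\rangle
=\frac{\sin(t\pi)}{\pi}\,e^{it\beta}\int_{0}^{\infty}r^{t-1}\langle A(re^{i\beta}I+A)^{-1}x,x\rangle\,dr.
\]

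Now comes the key computation. Setting $y=(re^{i\beta}I+A)^{-1}x$, so that $x=re^{i\beta}y+Ay$, a direct expansion gives
\[
\langle A(re^{i\beta}I+A)^{-1}x,x\rangle
= re^{-i\beta}\langle Ay,y\rangle+\|Ay\|^{2}.
\]
Because $y\neq 0$ and $W(A)\subseteq S_\alpha$, the scalar $\langle Ay,y\rangle$ lies in $S_\alpha$, so its argument lies in $[-\alpha,\alpha]$. Choosing $\beta=\alpha$, the first summand $re^{-i\alpha}\langle Ay,y\rangle$ has argument in $[-2\alpha,0]$; adding the nonnegative real $\|Ay\|^2$ can only pull the argument toward $0$, so the full integrand lies in the closed convex cone $\{z:\arg z\in[-2\alpha,0]\}\cup\{0\}$ (which is indeed convex since $2\alpha<\pi$). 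Its positively weighted integral stays in this cone, and multiplication by $e^{it\alpha}$ rotates the cone into $\{z:\arg z\in[-(2-t)\alpha,\,t\alpha]\}$. This proves $\arg\langle A^tx,x\rangle\le t\alpha$. The symmetric choice $\beta=-\alpha$ yields $\arg\langle A^tx,x\rangle\ge -t\alpha$, so $\langle A^tx,x\rangle\in S_{t\alpha}$.

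The main obstacle is the contour rotation: one must verify uniform vanishing of the arc integrals of $s^{t-1}A(sI+A)^{-1}$ as the small radius tends to $0$ and the large radius to $\infty$, and apply Cauchy's theorem on a sector. This is technical but standard once the decay estimates $O(|s|^{t-1})$ near the origin (via $A(sI+A)^{-1}=I-s(sI+A)^{-1}$) and $O(|s|^{t-2})$ near infinity (via a Neumann-series expansion) are in hand, both of which hold because $0<t<1$.
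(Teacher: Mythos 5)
Your argument is correct, but note that the paper itself offers no proof of this lemma: it is imported verbatim from Kato (Lemma A1) and Drury (Corollary 2.4), so there is nothing internal to compare against. What you have written is a legitimate, self-contained proof in the spirit of Kato's original integral-representation approach. The two key steps both check out: (i) the rotation of the ray of integration to angle $\beta$ with $|\beta|\le\alpha<\pi-\alpha$ is valid, since $-\sigma(A)\subseteq -S_\alpha$ stays off the closed sector swept by the rotation, the resolvent is bounded near $s=0$ (as $0\notin\sigma(A)$), and the Neumann bound $\|A(sI+A)^{-1}\|=O(|s|^{-1})$ holds uniformly in direction for $|s|>\|A\|$, so the small and large arcs contribute $O(\epsilon^{t})$ and $O(R^{t-1})$ respectively, both vanishing for $t\in(0,1)$; (ii) the identity $\langle A(re^{i\beta}I+A)^{-1}x,x\rangle=re^{-i\beta}\langle Ay,y\rangle+\|Ay\|^{2}$ with $y=(re^{i\beta}I+A)^{-1}x\neq0$ is exactly right under the convention that the inner product is conjugate-linear in the second slot, and the closed convex cone argument (legitimate because $2\alpha<\pi$, and preserved under absolutely convergent integration and the positive factor $\sin(t\pi)/\pi$) delivers $\arg\langle A^{t}x,x\rangle\le t\alpha$ for $\beta=\alpha$, with the symmetric choice giving the lower bound. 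Two trivial housekeeping points: the lemma is stated for $t\in[0,1]$, so you should remark that $t=0,1$ are immediate since \eqref{fractional power-definition} only covers $t\in(0,1)$; and it is worth one line to justify that an absolutely convergent integral of a function valued in a closed convex cone lies in that cone (e.g., by testing against the supporting half-planes). Compared with the cheap alternative available from the paper's own toolkit — applying accretivity-preservation to $e^{\pm i(\pi/2-\alpha)}A$ together with Proposition \ref{Fractional Power of the rotational Matrix}, which only yields the weaker half-angle $\tfrac{\pi}{2}-t(\tfrac{\pi}{2}-\alpha)$ — your rotation of the integration ray to $\pm\alpha$ is precisely what recovers the sharp angle $t\alpha$, so the extra technical work is genuinely needed.
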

\begin{lemma}\cite[Proposition 7.1]{Bedrani2021_1}\label{Fractional Power ineq for R(A)}
    Let \(A\in \Gamma_n\) and let \(t\in[0,1]\). Then \[\Re^t(A)\leq \Re(A^t).\]
\end{lemma}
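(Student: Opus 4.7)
The plan is to apply the integral representation \eqref{fractional power-definition} to both $A$ and to $H:=\Re(A)$, which is positive definite since $A\in\Gamma_n$, and then compare the resulting expressions pointwise in the integration variable. The boundary cases $t=0,1$ are trivial, so I focus on $t\in(0,1)$, where the scalar $\sin(t\pi)/\pi$ is strictly positive.

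First, I would use the resolvent identity $A(sI+A)^{-1}=I-s(sI+A)^{-1}$ (and the analogous identity for $H$) to rewrite the integrands. Taking real parts and subtracting \emph{before} simplifying, the constant $I$-terms cancel inside the integral, yielding
\begin{equation*}
\Re(A^t)-H^t=\frac{\sin(t\pi)}{\pi}\int_0^\infty s^{t}\bigl[(sI+H)^{-1}-\Re\bigl((sI+A)^{-1}\bigr)\bigr]\,ds.
\end{equation*}
Consequently, it suffices to establish the pointwise operator inequality
\begin{equation*}
\Re\bigl((sI+A)^{-1}\bigr)\leq (sI+H)^{-1}\qquad\text{for all }s>0.
\end{equation*}

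To prove this pointwise bound, I would write $sI+A=P+iK$ with $P:=sI+H>O$ and $K:=\Im(A)$, and then factor $sI+A=P^{1/2}(I+iL)P^{1/2}$ where $L:=P^{-1/2}KP^{-1/2}$ is Hermitian. Spectral calculus on $L$ gives
\begin{equation*}
\Re\bigl((I+iL)^{-1}\bigr)=\tfrac{1}{2}\bigl((I+iL)^{-1}+(I-iL)^{-1}\bigr)=(I+L^{2})^{-1}\leq I.
\end{equation*}
A congruence by $P^{-1/2}$ then gives $\Re((sI+A)^{-1})=P^{-1/2}(I+L^{2})^{-1}P^{-1/2}\leq P^{-1}=(sI+H)^{-1}$, which is what was needed.

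I expect the most delicate point to be the very first manipulation: if one naively splits the real parts of the individual summands, the piece $\int_0^\infty s^{t-1}I\,ds$ diverges at both endpoints. The cure is to subtract the $H$-integrand from the $A$-integrand before cancelling the $I$-terms, after which the new integrand is $O(s^{t})$ near $0$ and $O(s^{t-2})$ at infinity, both integrable for $t\in(0,1)$. Once this formal issue is handled, the remainder is a clean congruence argument built on the scalar inequality $1+\lambda^{2}\geq 1$.
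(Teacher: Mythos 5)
This lemma is not proved in the paper at all---it is quoted from \cite{Bedrani2021_1}---so there is no internal proof to compare against; your argument is correct and is essentially the standard route behind the cited result. You combine the representation \eqref{fractional power-definition} for both $A$ and $H=\Re(A)$ with the pointwise resolvent bound $\Re\bigl((sI+A)^{-1}\bigr)\le (sI+H)^{-1}$, which your congruence $sI+A=P^{1/2}(I+iL)P^{1/2}$ and the scalar identity $\Re\bigl((1+i\lambda)^{-1}\bigr)=(1+\lambda^{2})^{-1}\le 1$ establish cleanly. Your handling of the formally divergent $\int_0^\infty s^{t-1}\,ds$ term is also sound: since both integrals defining $A^t$ and $H^t$ converge, subtracting the integrands before cancelling the identity terms is legitimate, and the resulting difference is indeed $O(s^{t})$ near $0$ and $O(s^{t-2})$ at infinity, hence integrable for $t\in(0,1)$, with the endpoint cases $t=0,1$ trivial.
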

While the class of accretive matrices is stable under taking inverses, the class of accretive-dissipative matrices is not, as the following lemma states.
\begin{lemma}\cite[Lemma 1]{Ikramov2004}\label{Inverse of A is not accretive-dissipative}
  Let \(A=H+iK\) be accretive-dissipative. Then \(A^{-1}=E+iF\), where \[E= \Big(H+KH^{-1}K\Big)^{-1}>O,\] 
  and 
  \[F= -\Big(K+HK^{-1}H\Big)^{-1}<O.\]  
  Hence, \(A^{-1}\) is not accretive-dissipative.
\end{lemma}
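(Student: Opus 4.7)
The plan is to verify the claimed expressions for $E = \Re(A^{-1})$ and $F = \Im(A^{-1})$ by a short guess-and-verify computation. I would deliberately avoid trying to decouple the equation $AA^{-1} = I$ into two real-linear systems by naively equating ``real'' and ``imaginary'' parts of the product $(H+iK)(E+iF)$, since the constituents $HE$ and $KF$ of Hermitian matrices need not themselves be Hermitian, so such a decoupling is not automatic.

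First, I would confirm that the two matrices $H + KH^{-1}K$ and $K + HK^{-1}H$ are positive definite, so that the putative formulas make sense. This is immediate: $H > O$ together with $KH^{-1}K$ being positive definite (using that $K > O$ is invertible and $\langle KH^{-1}Kx, x\rangle = \langle H^{-1}(Kx), Kx\rangle > 0$ for $x \ne 0$) gives $H + KH^{-1}K > O$, and symmetrically for $K + HK^{-1}H$. Consequently the proposed $E$ is Hermitian positive definite, and the proposed $F$ is Hermitian negative definite.

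The heart of the proof is the algebraic identity
\[
H^{-1} K \bigl(H + KH^{-1}K\bigr)^{-1} = \bigl(K + HK^{-1}H\bigr)^{-1},
\]
which I would obtain in a single line by inverting and expanding: $(H + KH^{-1}K) K^{-1} H = HK^{-1}H + K$. This lets me rewrite the claimed $F$ as $F = -H^{-1}KE$, after which the verification
\[
(H + iK)\bigl(E - iH^{-1}KE\bigr) = HE + KH^{-1}KE + i\bigl(KE - KE\bigr) = (H + KH^{-1}K)E = I
\]
is instantaneous. Since $E$ and $F$ are Hermitian and $A(E + iF) = I$, the uniqueness of the Toeplitz decomposition of $A^{-1}$ forces $E = \Re(A^{-1})$ and $F = \Im(A^{-1})$, as claimed.

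The concluding assertion that $A^{-1}$ is not accretive-dissipative is immediate from $F = \Im(A^{-1}) < O$, which violates dissipativity. Note that accretivity $\Re(A^{-1}) > O$ is preserved, consistent with the well-known closure of $\Gamma_n$ under inversion. I do not foresee any substantive obstacle; the only nontrivial ingredient is the one-line rearrangement of inverses above, and the rest is bookkeeping.
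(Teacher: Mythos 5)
Your proposal is correct. Note that the paper does not prove this lemma at all; it is quoted verbatim from Ikramov \cite[Lemma 1]{Ikramov2004}, so there is no in-paper argument to compare against — your write-up is a self-contained verification of the cited result. Each step checks out: $H+KH^{-1}K$ and $K+HK^{-1}H$ are Hermitian positive definite (congruence of $H^{-1}$, resp.\ $K^{-1}$, by the invertible Hermitian $K$, resp.\ $H$); the rearrangement $(H+KH^{-1}K)K^{-1}H = HK^{-1}H+K$ does give $H^{-1}K\bigl(H+KH^{-1}K\bigr)^{-1}=\bigl(K+HK^{-1}H\bigr)^{-1}$, i.e.\ $F=-H^{-1}KE$; and the product $(H+iK)(E-iH^{-1}KE)=(H+KH^{-1}K)E=I$ is as you computed, with the $i$-terms cancelling. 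Since a one-sided inverse in $\mathcal{M}_n$ is the inverse and $E,F$ are Hermitian, uniqueness of the Toeplitz decomposition indeed pins down $E=\Re(A^{-1})>O$ and $F=\Im(A^{-1})<O$, whence $A^{-1}$ fails to be dissipative. Your opening caution about not naively splitting $(H+iK)(E+iF)=I$ into ``real and imaginary parts'' is well taken, and your guess-and-verify route cleanly sidesteps it; this is essentially the standard derivation of Ikramov's formula.
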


The last lemma is an equivalent identity for the numerical radius, and can be found in \cite{Haagerup1992}.
\begin{lemma}\label{w_theta_ident}
Let $A\in\mathcal{M}_n$. Then
\[\omega(A)=\sup_{\theta\in\mathbb{R}}\left\|\Re\left(e^{i\theta}A\right)\right\|.\]
\end{lemma}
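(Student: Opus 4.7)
The plan is to expand the modulus $|\langle Ax,x\rangle|$ in terms of its real part after rotating by an optimal phase, then interchange the suprema over the rotation angle $\theta$ and the unit vector $x$. The starting point is the elementary identity $|z|=\sup_{\theta\in\mathbb{R}}\Re(e^{i\theta}z)$ valid for every $z\in\mathbb{C}$. Applying this pointwise to $z=\langle Ax,x\rangle$ and using linearity, I would write
\[
|\langle Ax,x\rangle|=\sup_{\theta\in\mathbb{R}}\Re\bigl(e^{i\theta}\langle Ax,x\rangle\bigr)=\sup_{\theta\in\mathbb{R}}\bigl\langle \Re(e^{i\theta}A)x,x\bigr\rangle,
\]
where the last equality uses that $\Re(e^{i\theta}A)=\tfrac{1}{2}(e^{i\theta}A+e^{-i\theta}A^{*})$ is Hermitian and satisfies $\langle \Re(e^{i\theta}A)x,x\rangle=\Re\langle e^{i\theta}Ax,x\rangle$.

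Next, I would take the supremum over $x\in\mathcal{S}_1(\mathbb{C}^n)$ of both sides and swap the two suprema (which is valid because both are genuine suprema over independent variables). This gives
\[
\omega(A)=\sup_{\theta\in\mathbb{R}}\sup_{x\in \mathcal{S}_1(\mathbb{C}^n)}\bigl\langle \Re(e^{i\theta}A)x,x\bigr\rangle=\sup_{\theta\in\mathbb{R}}\lambda_{\max}\bigl(\Re(e^{i\theta}A)\bigr),
\]
since for the Hermitian matrix $B_\theta:=\Re(e^{i\theta}A)$ the maximum of the Rayleigh quotient equals the largest eigenvalue.

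The key step remaining is to upgrade $\lambda_{\max}(B_\theta)$ to $\|B_\theta\|$ under the supremum over $\theta$. This uses the symmetry $B_{\theta+\pi}=-B_\theta$: if $\lambda$ is an eigenvalue of $B_\theta$, then $-\lambda$ is an eigenvalue of $B_{\theta+\pi}$, so
\[
\sup_{\theta\in\mathbb{R}}\lambda_{\max}(B_\theta)=\sup_{\theta\in\mathbb{R}}\max\bigl\{|\lambda|:\lambda\in\sigma(B_\theta)\bigr\}=\sup_{\theta\in\mathbb{R}}\|B_\theta\|,
\]
where the last equality is because $B_\theta$ is Hermitian and its spectral norm is the maximal absolute eigenvalue. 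Combining this with the previous display yields the claimed identity. The only nontrivial point to explain carefully is this symmetry-based replacement of $\lambda_{\max}$ by the spectral norm; everything else is a bookkeeping exercise with the definition of $\omega(\cdot)$ and the Rayleigh quotient for Hermitian matrices.
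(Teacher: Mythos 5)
Your proof is correct. Note, however, that the paper itself does not prove this lemma at all: it is quoted directly from the Haagerup--de la Harpe reference, so there is no in-paper argument to compare yours against. Your chain of steps is sound: the pointwise identity $|z|=\sup_{\theta\in\mathbb{R}}\Re\left(e^{i\theta}z\right)$, the identification $\Re\langle e^{i\theta}Ax,x\rangle=\langle\Re\left(e^{i\theta}A\right)x,x\rangle$, the (always legitimate) interchange of the two suprema, the Rayleigh-quotient characterization of $\lambda_{\max}$ for the Hermitian matrix $B_\theta=\Re\left(e^{i\theta}A\right)$, and finally the symmetry $B_{\theta+\pi}=-B_\theta$, which gives $\|B_\theta\|=\max\{\lambda_{\max}(B_\theta),\lambda_{\max}(B_{\theta+\pi})\}$ and hence lets you replace $\lambda_{\max}$ by the spectral norm under $\sup_\theta$. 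One small streamlining worth noting: the last step can be done without invoking the eigenvalue symmetry, since for each fixed $\theta$ one has $\|B_\theta\|=\sup_{\|x\|=1}\left|\langle B_\theta x,x\rangle\right|=\sup_{\|x\|=1}\left|\Re\left(e^{i\theta}\langle Ax,x\rangle\right)\right|\leq\omega(A)$, which gives $\sup_\theta\|B_\theta\|\leq\omega(A)$ directly, while your steps already give the reverse inequality $\omega(A)=\sup_\theta\lambda_{\max}(B_\theta)\leq\sup_\theta\|B_\theta\|$; but your symmetry argument is equally valid.
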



\section{Main Results}
We begin this section by proving the following fact. Using Lemma \ref{w_theta_ident}, it can be easily seen that  $\omega(A)$ is attained at some angle \( \theta \in [0, 2\pi] \). The result below identifies the exact value of such an angle \( \theta \). To the best of the authors' knowledge, this specific proof does not appear in the existing literature. In what follows, $Arg(z)$ will refer to the principal argument of the complex number $z$. That is, $-\pi<Arg(z)\leq \pi.$ The goal of presenting this proof lies in the approach we are adopting to prove our main results, where this geometric approach will be crucial.

\begin{proposition}
    \label{N. rad is attained}
    Let \(A \in \mathcal{M}_n\) and let \(x_0 \in \mathcal{S}_1(\mathbb{C}^n)\) be such that \(\omega(A) = |\langle Ax_0, x_0 \rangle|\). If  \(\gamma = Arg(\langle Ax_0, x_0 \rangle)\), then 
    \[
    \omega(A) = \left\Vert \Re\left(e^{-i\gamma} A\right) \right\Vert.
    \]
\end{proposition}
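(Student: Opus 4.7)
The plan is to prove the equality by establishing both inequalities, with the forward bound following geometrically from the fact that at the optimal rotation $\gamma$, the complex scalar $e^{-i\gamma}\langle Ax_0,x_0\rangle$ becomes real and positive (in fact equal to $\omega(A)$), while the reverse bound is essentially built into Lemma \ref{w_theta_ident}.

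For the lower bound $\|\Re(e^{-i\gamma}A)\|\geq \omega(A)$, I would test the Hermitian matrix $\Re(e^{-i\gamma}A)$ on the unit vector $x_0$. Since $\gamma=\text{Arg}(\langle Ax_0,x_0\rangle)$, the rotation by $e^{-i\gamma}$ turns $\langle Ax_0,x_0\rangle$ into the nonnegative real number $|\langle Ax_0,x_0\rangle|=\omega(A)$. Hence
\[
\bigl\langle \Re(e^{-i\gamma}A)\,x_0,x_0\bigr\rangle=\Re\bigl(e^{-i\gamma}\langle Ax_0,x_0\rangle\bigr)=\omega(A),
\]
and because $\Re(e^{-i\gamma}A)$ is Hermitian, its norm agrees with its numerical radius, which is at least the absolute value of this single quadratic-form evaluation.

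For the upper bound $\|\Re(e^{-i\gamma}A)\|\leq \omega(A)$, I would simply invoke Lemma \ref{w_theta_ident}: the supremum expression gives $\|\Re(e^{i\theta}A)\|\leq \omega(A)$ for every $\theta\in\mathbb{R}$, and specializing to $\theta=-\gamma$ is precisely what is needed. Combining the two bounds yields the desired identity.

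There is no real obstacle here; the statement is essentially the observation that the supremum in Lemma \ref{w_theta_ident} is attained at $\theta=-\gamma$, and the geometric content is that the line $\{z\in\mathbb{C}:\Re(e^{-i\gamma}z)=\omega(A)\}$ is a supporting line of the convex set $W(A)$ touching it at the point $\langle Ax_0,x_0\rangle$ of maximal modulus. The only subtlety to flag is the easy step that for a Hermitian $H$ one has $\|H\|=\omega(H)=\sup_x|\langle Hx,x\rangle|$, which legitimizes reading off $\|\Re(e^{-i\gamma}A)\|$ from a single vector $x_0$. I would present the proof in that order — (i) identify the optimal rotation geometrically, (ii) plug $x_0$ into $\Re(e^{-i\gamma}A)$ to get $\geq\omega(A)$, (iii) use Lemma \ref{w_theta_ident} for $\leq\omega(A)$ — so as to foreshadow the geometric strategy the authors announce for the later, harder results.
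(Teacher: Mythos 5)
Your proposal is correct and follows essentially the same route as the paper: the lower bound by evaluating the Hermitian matrix $\Re\left(e^{-i\gamma}A\right)$ at $x_0$ (so that the rotated quadratic form equals $\omega(A)$), and the upper bound by the trivial estimate $\bigl\Vert\Re\left(e^{i\theta}A\right)\bigr\Vert\leq\omega(A)$, which the paper phrases via $\omega(\Re(X))\leq\omega(X)$ and you phrase via the easy direction of Lemma \ref{w_theta_ident} — an inessential difference.
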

\begin{proof}
Notice first that, by the compactness of \(\mathcal{S}_1(\mathbb{C}^n)\), we have \(\omega(A)= |\langle Ax_0,x_0\rangle|\) for some \(x_0\in \mathcal{S}_1(\mathbb{C}^n)\) .
Let \(\gamma\) be the principal argument of \(\langle Ax_0,x_0\rangle\). Then \[\langle Ax_0,x_0\rangle= |\langle Ax_0,x_0\rangle| e^{i\gamma} =\omega(A) e^{i\gamma}.\]
Now, noting that the matrix $ \Re\Big(e^{-i \gamma}\,A\Big)$ is Hermitian, and that for any matrix $X$, $\omega(\Re(X))\leq \omega(X)$, we deduce the following simple consequences:
\begin{align*}
    \Big\Vert \Re\Big(e^{-i \gamma}\,A\Big)\Big\Vert & = \omega\Big( \Re\Big(e^{-i \gamma}\,A\Big)\Big) \leq \omega(e^{-i \gamma}\,A) = \omega(A).
\end{align*}
On the other hand, 
\begin{align*}
    \Big\Vert \Re\Big(e^{-i \gamma}\,A\Big)\Big\Vert &  = \omega\Big( \Re\Big(e^{-i \gamma}\,A\Big)\Big)\\
    &= \displaystyle \sup_{||x||=1}\, \left| \langle \Re\Big(e^{-i \gamma}\,A\Big)x,x\rangle \right|\\
    & \geq \Big\vert \langle \Re\Big(e^{-i \gamma}\,A\Big)x_0,x_0\rangle \Big\vert\\
    &= \Big\vert \Re\Big(e^{-i \gamma}\,\langle Ax_0,x_0\rangle\Big)\Big\vert\\
    &= \Big\vert \Re\Big(e^{-i \gamma}\, e^{i\gamma} \omega(A)\Big)\Big\vert= \Big\vert \Re (\omega(A))\Big\vert\\
    &= \omega(A).
    \end{align*}
    This completes the proof.
\end{proof}
\begin{remark}\label{angular distance of w(A)}
    If \(A\) is accretive-dissipative, then \(W(A)\subseteq \{z\in \mathbb{C}: \Re(z), \Im(z)>0\}\). This means that $0< Arg\left<Ax,x\right><\frac{\pi}{2}$ for any $x\in\mathbb{C}^n.$  Consequently, Proposition \ref{N. rad is attained} implies that \(\omega(A)= \Big\Vert \Re\Big(e^{-i \gamma}\,A\Big)\Big\Vert\) for some \(\gamma\in (0,\pi/2)\).
\end{remark}
As we mentioned earlier, if $A\in\mathcal{M}_n$ is accretive, and $0<t<1$, then $A^t$ is accretive. In the following Proposition, we extend this observation to accretive-dissipative matrices.
\begin{proposition}\label{Fractional Power of Accretive-Dissipative}\label{A^t is accretive-dissipative}
    Let \(A\in \mathcal{M}_n\) be accretive-dissipative, and let $0<t<1$. Then \(A^t\) is accretive-dissipative.
\end{proposition}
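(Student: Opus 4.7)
The plan is to prove this by a rotation-then-sector argument, reducing everything to Lemma \ref{accretivity of A^t}. Since $A$ is accretive-dissipative, the numerical range $W(A)$ is a compact subset of the open first quadrant. In particular, the principal argument $\mathrm{Arg}\langle Ax,x\rangle$ is a continuous function on the unit sphere taking values in $(0,\pi/2)$, so it attains a minimum $\beta$ and a maximum $\gamma$ with $0<\beta\leq\gamma<\pi/2$. Setting $\phi=(\beta+\gamma)/2$ and $\alpha=(\gamma-\beta)/2$, we obtain
\[
W(A)\subseteq e^{i\phi}\,S_\alpha,\qquad \phi-\alpha=\beta>0,\qquad \phi+\alpha=\gamma<\tfrac{\pi}{2}.
\]

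Next, I would put $B=e^{-i\phi}A$. Then $W(B)=e^{-i\phi}W(A)\subseteq S_\alpha$, so $B$ is accretive in the sectorial sense with angle $\alpha$. Lemma \ref{accretivity of A^t} then gives $W(B^t)\subseteq S_{t\alpha}$. To translate this back to $A^t$, I would verify the identity $B^t=e^{-it\phi}A^t$ via the Dunford integral \eqref{Dinford Integral}: substituting $z=e^{i\phi}w$ in the integral defining $A^t$ yields, after accounting for the principal branches of $z^t$ and $w^t$ (which agree because $\phi\in(0,\pi/2)$ keeps everything within the principal domain), the integral representation of $e^{it\phi}B^t$. Consequently,
\[
W(A^t)=e^{it\phi}W(B^t)\subseteq e^{it\phi}S_{t\alpha}
=\bigl\{z\in\mathbb{C}:\mathrm{Arg}(z)\in[t(\phi-\alpha),\,t(\phi+\alpha)]\bigr\}.
\]

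Finally, I would conclude: since $0<t<1$, the interval $[t\beta,t\gamma]$ lies strictly inside $(0,\pi/2)$, so every nonzero element of $W(A^t)$ has strictly positive real and imaginary parts. In particular $\Im\langle A^t x,x\rangle>0$ for every unit vector $x$, hence $\Im(A^t)>O$; combined with the already known accretivity of $A^t$ (Lemma \ref{accretivity of A^t} applied to $A$ itself, or the references cited in the introduction), this proves that $A^t$ is accretive-dissipative.

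The main technical obstacle I foresee is the branch/contour justification in step two, namely checking rigorously that $(e^{-i\phi}A)^t=e^{-it\phi}A^t$ for the principal fractional powers when $\phi\in(0,\pi/2)$. This requires deforming the Dunford contour $\Omega_A$ to a valid contour for $B$ that winds once around $\sigma(B)=e^{-i\phi}\sigma(A)$ and avoids $(-\infty,0]$; this is possible precisely because $W(B)$, and therefore $\sigma(B)$, lies in the open sector $\{z:\mathrm{Arg}(z)\in(-\phi,\pi/2-\phi)\}$, which is disjoint from the branch cut. Everything else is book-keeping on arguments of complex numbers.
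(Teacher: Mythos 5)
Your proof is correct, but it follows a genuinely different route from the paper. The paper works directly with the integral representation \eqref{fractional power-definition}: it gets $\Re(A^t)>O$ from Lemma \ref{accretivity of A^t}, and for the imaginary part it writes the integrand as $A(sI+A)^{-1}=I-s(sI+A)^{-1}$ and invokes Ikramov's Lemma \ref{Inverse of A is not accretive-dissipative}, which says that the inverse of the accretive-dissipative matrix $sI+A$ has negative definite imaginary part; integrating then gives $\Im(A^t)>O$. You instead rotate $A$ by $e^{-i\phi}$ so that its numerical range sits in a symmetric sector $S_\alpha$, apply the Kato--Drury sector Lemma \ref{accretivity of A^t} to the rotated matrix, and transport the conclusion back using the identity $\bigl(e^{-i\phi}A\bigr)^t=e^{-it\phi}A^t$ --- which is exactly the paper's Proposition \ref{Fractional Power of the rotational Matrix}, proved there afterwards by the same Dunford-integral substitution you sketch, so there is no circularity as long as you prove it independently as you propose. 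Your route avoids Ikramov's lemma entirely and in fact yields a sharper, more geometric statement: if the arguments of $W(A)$ lie in $[\beta,\gamma]\subset(0,\pi/2)$, then those of $W(A^t)$ lie in $[t\beta,t\gamma]$, a rotated-sector refinement of Lemma \ref{accretivity of A^t} from which accretive-dissipativity falls out as a special case; the paper's argument is shorter given Ikramov's lemma and needs no branch-of-$z^t$ bookkeeping. One small point you handle correctly but should keep explicit: the sector containment alone allows $0\in W(A^t)$, so the strict positivity of $\Re$ and $\Im$ does require the known accretivity of $A^t$ (Kato/Drury), exactly as you note in your final step --- the paper's own proof leans on the same fact.
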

\begin{proof}
Let \(A\in \mathcal{M}_n\) be accretive-dissipative. Then so is \(sI+A\) for \(s>0\). Consequently, by Lemma \ref{Inverse of A is not accretive-dissipative}, we have \(-\Im\Big((sI+A)^{-1}\Big)>O\).\\
Now, let \(t\in (0,1)\). Then, by Lemma \ref{accretivity of A^t}, \(\Re(A^t)>O\). So we only need to prove \(\Im(A^t)>O\). But, by \eqref{fractional power-definition}, we have
\[A^t= \dfrac{\sin(t\pi)}{\pi} \displaystyle\int_{0}^{\infty}\, s^{t-1} A (sI+A)^{-1}\, ds.\]
So, it is enough to prove that \(\Im\Big(A(sI+A)^{-1}\Big)\) is positive-definite. Since \(A(sI+A)^{-1}= I-s(sI+A)^{-1}\), it follows that
\[\Im\Big(A(sI+A)^{-1}\Big) = - \Im\Big((sI+A)^{-1}\Big)>O,\]
which completes the proof. 
\end{proof}
While the following observation follows from basic properties of matrix logarithmic functions, we present a proof using the Dunford integral, where we implement our geometric intuition, which is the key idea behind this work.
\begin{proposition}\label{Fractional Power of the rotational Matrix}
    Let \(A\in \Gamma_n\) and let \(\frac{-\pi}{2}<\theta<\frac{\pi}{2}\). Then, for any \(t\in (0,1)\),  \[\Big(e^{i\theta}A\Big)^t= e^{it \theta} A^t.\]
\end{proposition}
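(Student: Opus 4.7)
The plan is to exploit the Dunford integral representation \eqref{Dinford Integral} and a simple change of variables, using the geometric fact that the hypothesis $|\theta|<\pi/2$ keeps the rotated spectrum away from the branch cut of the principal power. Concretely, since $A\in\Gamma_n$, both $A$ and $e^{i\theta}A$ satisfy $W(\cdot)\cap(-\infty,0]=\varnothing$, so their principal fractional powers are well defined. Fix a contour $\Omega_A$ in the resolvent set of $A$ that winds once around each eigenvalue of $A$ and stays close to $W(A)$, in particular inside the open right half-plane; then the image $e^{i\theta}\Omega_A$ is a contour that winds once around each eigenvalue of $e^{i\theta}A$ and (because $|\theta|<\pi/2$) stays inside the open half-plane $\{z:\Re(e^{-i\theta}z)>0\}$, which is disjoint from $(-\infty,0]$.

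Next I would substitute $w=e^{i\theta}z$ (so $dz=e^{-i\theta}\,dw$) in
\[
(e^{i\theta}A)^t=\frac{1}{2\pi i}\int_{e^{i\theta}\Omega_A} w^t\,(wI-e^{i\theta}A)^{-1}\,dw,
\]
giving, after using $(wI-e^{i\theta}A)^{-1}=e^{-i\theta}(e^{-i\theta}wI-A)^{-1}$,
\[
(e^{i\theta}A)^t=\frac{1}{2\pi i}\int_{\Omega_A} (e^{i\theta}z)^t\,(zI-A)^{-1}\,dz.
\]
Comparing with the definition of $A^t$, the identity $(e^{i\theta}A)^t=e^{it\theta}A^t$ reduces to the scalar statement $(e^{i\theta}z)^t=e^{it\theta}z^t$ holding for every $z$ on the contour $\Omega_A$.

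The delicate step, and the only real obstacle, is verifying this scalar identity under the principal branch. For $z\in\Omega_A\subset\{\Re z>0\}$ we have $Arg(z)\in(-\pi/2,\pi/2)$, hence $Arg(z)+\theta\in(-\pi,\pi)$ thanks to the hypothesis $|\theta|<\pi/2$. No $2\pi$ correction is therefore needed to bring $Arg(z)+\theta$ into the principal range, and
\[
Arg(e^{i\theta}z)=Arg(z)+\theta,\qquad |e^{i\theta}z|^t=|z|^t,
\]
so $(e^{i\theta}z)^t=|z|^t e^{it(Arg(z)+\theta)}=e^{it\theta}z^t$. (If $|\theta|$ were allowed to reach $\pi/2$, eigenvalues of $A$ lying on the imaginary axis would push $Arg(z)+\theta$ to $\pm\pi$, forcing a $2\pi i t$ correction and breaking the identity; this is exactly why the geometric restriction on $\theta$ is essential.) Feeding this equality back into the integral above yields $(e^{i\theta}A)^t=e^{it\theta}A^t$, completing the argument.
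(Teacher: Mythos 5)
Your proposal is correct and follows essentially the same route as the paper: both use the Dunford integral \eqref{Dinford Integral} with a contour for $A$ taken in the open right half-plane, rotate it by $\theta$, change variables, and reduce the claim to the scalar principal-branch identity $(e^{i\theta}z)^t=e^{it\theta}z^t$, justified exactly as you do by $Arg(z)+\theta\in(-\pi,\pi)$ since $|Arg(z)|<\pi/2$ and $|\theta|<\pi/2$. The only difference is the cosmetic direction of the substitution ($w=e^{i\theta}z$ versus the paper's $u=e^{-i\theta}z$), and your parenthetical remark on why $|\theta|<\pi/2$ is sharp is a nice addition not spelled out in the paper.
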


\begin{proof}
    Let \(A\in \Gamma_n\) and let \(\frac{-\pi}{2}<\theta<\frac{\pi}{2}\). Since $W(A)\subseteq S_{\alpha}$ for some $0<\alpha<\frac{\pi}{2}$,  it follows that \(W(e^{i\theta} A)\) avoids \((-\infty,0]\). Hence, using \eqref{Dinford Integral}, we can write 
    \begin{equation}\label{Dinford integral of the rotaion matrix}
    \Big(e^{i\theta} A\Big)^t = \dfrac{1}{2\pi i}\displaystyle\int_{\Omega_{e^{i\theta} A}}\, z^t \Big(zI-e^{i\theta}A\Big)^{-1}\, dz,\; 0<t<1.
    \end{equation}
    In this identity, \(\Omega_{e^{i\theta} A}\) refers to a rotation by $\theta$ of a contour $\Omega_{A}$ that lies completely in the right-half plane, and winds exactly once about each eigenvalue of $A$. Such assumption is justified by the fact that $A$ is accretive. Due to this, $\Omega_{e^{i\theta}A}$ avoids \((-\infty,0]\). 
    
    Now, 
    \begin{align*}
        \Big(e^{i\theta} A\Big)^t &= \dfrac{1}{2\pi i}\displaystyle\int_{\Omega_{e^{i\theta} A}}\, z^t \Big(zI-e^{i\theta}A\Big)^{-1}\, dz\\
        & = \dfrac{1}{2\pi i}\displaystyle\int_{\Omega_{e^{i\theta} A}}\,e^{-i\theta} z^t \Big(e^{-i\theta}zI-A\Big)^{-1}\, dz.
    \end{align*}
    Setting \(u=e^{-i\theta} z\) rotates the contour \(\Omega_{e^{i\theta}A}\) by an angle $-\theta$ to obtain $\Omega_{A}$ back. Now, since \(u\) is a complex number in the right half plane, then \(Arg(u) \in(\frac{-\pi}{2},\frac{\pi}{2})\) and consequently, \(-\pi<Arg(z)= Arg(u)+\theta <\pi\). This ensures the validity of the following factorization
    \[z^t= (e^{i\theta}u)^t=e^{it\theta}\, u^t.\] 
    Hence,
  \begin{align*}
      \Big(e^{-i\gamma}A\Big)^t & = \dfrac{1}{2\pi i}\displaystyle\int_{\Omega_{e^{i\theta} A}}\, z^t \Big(zI-e^{i\theta}A\Big)^{-1}\, dz\\
        & = \dfrac{1}{2\pi i}\displaystyle\int_{\Omega_{e^{i\theta} A}}\,e^{-i\theta} z^t \Big(e^{-i\theta}zI-A\Big)^{-1}\, dz\\
      &=\dfrac{1}{2\pi i}\displaystyle\int_{\Omega_A}\, (e^{i\theta}u)^t \Big(uI-A\Big)^{-1}\, du\\
      &= \dfrac{1}{2\pi i}\displaystyle\int_{\Omega_{u}}\,e^{it\theta} u^t \Big(uI-A\Big)^{-1}\, du\\
      &= e^{it\theta} \Big( \dfrac{1}{2\pi\, i}\displaystyle\int_{\Omega_A}\, u^t \Big(uI-A\Big)^{-1}\, du\Big)\\
      &= e^{it\theta} A^t.
  \end{align*}
  This completes the proof.
\end{proof}

Now, we are ready to prove  our main result, where we address Conjecture \ref{Conj_Main_1} for accretive-dissipative matrices.
\begin{theorem}\label{w(Fractional A)}
    Let \(A\in \mathcal{M}_n\) be accretive-dissipative and let \(t\in (0,1)\). Then \[\omega(A^t) \geq \omega^t(A) .\]
\end{theorem}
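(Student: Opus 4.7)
The plan is to exploit the geometric identification of $\omega(A)$ furnished by Proposition \ref{N. rad is attained} together with the "angle bound" recorded in Remark \ref{angular distance of w(A)}, and then combine it with the inequality $\Re^t(B)\le \Re(B^t)$ from Lemma \ref{Fractional Power ineq for R(A)} and the rotation rule for fractional powers from Proposition \ref{Fractional Power of the rotational Matrix}.

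First I would pick $x_0\in\mathcal{S}_1(\mathbb{C}^n)$ realizing $\omega(A)=|\langle Ax_0,x_0\rangle|$, set $\gamma=Arg\langle Ax_0,x_0\rangle$, and invoke Remark \ref{angular distance of w(A)} to conclude that $\gamma\in(0,\pi/2)$ (this is where accretive-dissipativity is used explicitly). By Proposition \ref{N. rad is attained}, one has
\[
\omega(A)=\bigl\|\Re(e^{-i\gamma}A)\bigr\|.
\]
Next, let $B:=e^{-i\gamma}A$. Since rotating $W(A)$ (which sits in the open first quadrant) by $-\gamma$ with $\gamma\in(0,\pi/2)$ keeps it inside the open right half-plane, $B$ is accretive, and $\Re(B)>O$.

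The main computation is then a short chain. From Lemma \ref{Fractional Power ineq for R(A)} applied to $B$, $\Re^t(B)\le \Re(B^t)$ as a matrix inequality between Hermitian positive semidefinite matrices, and hence
\[
\|\Re^t(B)\|\le \|\Re(B^t)\|.
\]
Because $\Re(B)>O$, Lemma \ref{Fractioal Power ineq for the norm of A} gives $\|\Re^t(B)\|=\|\Re(B)\|^t=\omega^t(A)$. On the other side, Proposition \ref{Fractional Power of the rotational Matrix} (applied with the angle $-\gamma\in(-\pi/2,\pi/2)$) yields $B^t=e^{-it\gamma}A^t$, so Lemma \ref{w_theta_ident} bounds
\[
\|\Re(B^t)\|=\bigl\|\Re(e^{-it\gamma}A^t)\bigr\|\le \sup_{\theta\in\mathbb{R}}\bigl\|\Re(e^{i\theta}A^t)\bigr\|=\omega(A^t).
\]
Putting the two estimates together delivers $\omega^t(A)\le \omega(A^t)$.

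The truly delicate point is the opening move: one needs $\gamma\in(-\pi/2,\pi/2)$ so that Proposition \ref{Fractional Power of the rotational Matrix} is applicable and the factorization $(e^{-i\gamma}A)^t=e^{-it\gamma}A^t$ is valid on the principal branch. This is exactly what the accretive-dissipative hypothesis guarantees via Remark \ref{angular distance of w(A)}, and it is precisely this angle restriction that prevents the same proof from going through for a general accretive matrix (whose maximizing $\gamma$ can be anywhere in $(-\pi/2,\pi/2]$, in particular possibly at the boundary where the rotation rule fails). Once this geometric bookkeeping is arranged, the rest of the argument is the short three-line chain above.
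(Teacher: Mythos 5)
Your proposal is correct and follows essentially the same route as the paper's proof: the same choice of $\gamma$ via Proposition \ref{N. rad is attained}, the same chain $\omega^t(A)=\|\Re(e^{-i\gamma}A)\|^t=\|\Re^t(e^{-i\gamma}A)\|\le\|\Re((e^{-i\gamma}A)^t)\|=\|\Re(e^{-it\gamma}A^t)\|\le\omega(A^t)$ using Lemmas \ref{Fractioal Power ineq for the norm of A}, \ref{Fractional Power ineq for R(A)}, \ref{w_theta_ident} and Proposition \ref{Fractional Power of the rotational Matrix}. The only cosmetic difference is that you verify accretivity of $e^{-i\gamma}A$ by rotating the numerical range, whereas the paper writes $\Re(e^{-i\gamma}A)=\cos\gamma\,\Re(A)+\sin\gamma\,\Im(A)>O$; both are valid.
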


\begin{proof}
   Let \(x_0\in \mathcal{S}_1(\mathbb{C}^n)\) be such that \(\omega(A)= |\langle Ax_0,x_0\rangle|\), and let \(\gamma=Arg\langle Ax_0,x_0\rangle\). Since \(\Re\Big(e^{-i\gamma}A\Big)= \cos \gamma \Re(A)+ \sin\gamma \Im(A),\) and $0<\gamma<\frac{\pi}{2},$ it follows that \(\Re\Big(e^{-i\gamma}A\Big)>O,\) and hence \(e^{-i\gamma} A\) is accretive. Now,
   \begin{align*}
       \omega^t(A)&= \Big\Vert \Re\Big(e^{-i\gamma} A \Big)\Big\Vert^t \quad (\text{by Proposition \ref{N. rad is attained}})\\
       &= \Big\Vert \Re^t\Big(e^{-i\gamma} A\Big)\Big\Vert \quad (\text{by Lemma \ref{Fractioal Power ineq for the norm of A}})\\
        & \leq \Big\Vert \Re \Big((e^{-i\gamma } A)^t\Big)\Big\Vert
        \quad (\text{by Lemma \ref{Fractional Power ineq for R(A)}})\\
        & = \Big\Vert \Re \Big(e^{-it\gamma } A^t\Big)\Big\Vert
        \quad (\text{by Proposition \ref{Fractional Power of the rotational Matrix}})\\
        &\leq \displaystyle\sup_{\theta \in \mathbb{R}}\,\Big\Vert \Re\Big(e^{i\theta} A^t \Big)\Big\Vert\\
        &= \omega(A^t) \quad({\text{by Lemma \ref{w_theta_ident}}}).
   \end{align*}
   This completes the proof.
\end{proof}
Now for accretive matrices, we are able to prove the following partial affirmative answer to our conjecture. Notice that Theorem \ref{w(Fractional A)} plays a vital role in the proof of this result.
\begin{corollary}\label{cor_accretive_1/2}
Let $A\in\Gamma_n$ and let $0<t<\frac{1}{2}$. Then
\[\omega(A^t)\geq\omega^t(A).\]
\end{corollary}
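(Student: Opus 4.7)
The approach is to reduce the accretive case at exponents $t \in (0, 1/2)$ to the accretive-dissipative case of Theorem \ref{w(Fractional A)} via a rotation of $A^{1/2}$, and then to chain the resulting inequality with Proposition \ref{Prop_reci}. The driving geometric observation is that while an accretive $A$ may have $W(A) \subseteq S_\alpha$ with $\alpha$ arbitrarily close to $\pi/2$, Lemma \ref{accretivity of A^t} forces $W(A^{1/2}) \subseteq S_{\alpha/2}$, so the numerical range of the square root is always confined to a sector of aperture strictly less than $\pi/4$ about the positive real axis. This leaves just enough room to rotate $A^{1/2}$ inside the open first quadrant.

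Concretely, I would fix $\alpha \in [0, \pi/2)$ with $W(A)\subseteq S_\alpha$ and set $B := e^{i\pi/4} A^{1/2}$. A direct check shows that the arguments of the points of $W(B)$ lie in $[\pi/4 - \alpha/2,\ \pi/4 + \alpha/2] \subset (0, \pi/2)$, so $B$ is accretive-dissipative. Applying Theorem \ref{w(Fractional A)} to $B$ at exponent $s := 2t \in (0,1)$ would then yield
\[\omega(B^s) \geq \omega^s(B).\]

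The next step is to transport this inequality back to $A$. Since $A^{1/2}$ is accretive and $|\pi/4| < \pi/2$, Proposition \ref{Fractional Power of the rotational Matrix} gives $B^s = e^{is\pi/4}(A^{1/2})^s = e^{is\pi/4} A^t$, where the semigroup identity $(A^{1/2})^s = A^{s/2}$ comes from the Dunford integral definition used throughout the paper. Rotational invariance of $\omega$ then yields $\omega(B^s) = \omega(A^t)$ and $\omega(B) = \omega(A^{1/2})$, so the displayed inequality becomes $\omega(A^t) \geq \omega^{2t}(A^{1/2})$. Raising the bound from Proposition \ref{Prop_reci} (applied with $k = 2$) to the power $2t$ gives $\omega^{2t}(A^{1/2}) \geq \omega^{t}(A)$, and chaining these two estimates produces the claim.

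The main obstacle I anticipate is the fractional-power bookkeeping: verifying that the chosen rotation angle $\pi/4$ places $W(B)$ strictly inside the open first quadrant for every admissible $\alpha < \pi/2$, and confirming that the semigroup identity $(A^{1/2})^s = A^{s/2}$ is compatible with the principal-branch convention used in the definition of $A^t$. Once these technical points are settled, the remainder is a clean composition of three prior results, and the $\pi/4$ rotation is essentially the only choice that makes the geometry work for every accretive matrix at once.
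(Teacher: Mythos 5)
Your proposal is correct and follows essentially the same route as the paper: rotate $A^{1/2}$ by $e^{i\pi/4}$ (justified by Lemma \ref{accretivity of A^t} giving $W(A^{1/2})\subseteq S_{\alpha/2}\subset S_{\pi/4}$) to obtain an accretive-dissipative matrix, apply Theorem \ref{w(Fractional A)} with exponent $2t$, undo the rotation via Proposition \ref{Fractional Power of the rotational Matrix} and the unitary invariance of $\omega$, and finish with Proposition \ref{Prop_reci} for $k=2$. Your explicit verification that the arguments of $W(e^{i\pi/4}A^{1/2})$ lie strictly inside $(0,\pi/2)$ just spells out what the paper leaves implicit.
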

\begin{proof}
If $A\in\Gamma_n$, it follows by Lemma \ref{accretivity of A^t} that $W(A^{\frac{1}{2}})\subset S_{\pi/4}$. Consequently, the matrix \(e^{i \frac{\pi}{4}} A^{\frac{1}{2}}\) is accretive-dissipative. For convenience, let $r=2t$ so that $0<r<1.$ Applying Theorem \ref{w(Fractional A)} on $ e^{i\frac{\pi}{4}} A^{\frac{1}{2}}$, and noting Proposition \ref{Prop_reci}, we obtain
\begin{align*}
\omega(A^t)=\omega\left(\left(A^{\frac{1}{2}}\right)^r\right) &= \omega\left( e^{i (r\,\frac{\pi}{4})}\left(A^{\frac{1}{2}}\right)^r\right)\\ & = \omega\left( \left(e^{i\,\frac{\pi}{4}}A^{\frac{1}{2}}\right)^r\right) \\ & \geq \omega^r\left(e^{i\,\frac{\pi}{4}}\,A^{\frac{1}{2}}\right)\\
& = \omega^r\left(A^{\frac{1}{2}}\right)\\ & \geq \omega^{\frac{r}{2}}\left(A\right)=\omega^t(A).\\
\end{align*}
This completes the proof.
\end{proof}
At this point, we are in a position to discuss Conjecture \ref{Conj_Main_1} when $t>1$. For this, a careful treatment of $A^t$ should be done. Notice that if $A$ is accretive, and $t>1$, then $A^t$ is not necessarily accretive. However, if for some  $k\geq 2$, the matrix $A^k$ is accretive, it follows that $A^m$ is accretive for all $2\leq m\leq k$. This is due to Lemma \ref{accretivity of A^t}, and the fact that if $2\leq m\leq k$, then $m=\alpha k$ for some $\frac{2}{k}\leq \alpha\leq 1.$ This observation together with Corollary \ref{cor_accretive_1/2} implies the following complementary result. In this result, $m$ and $k$ are not necessarily integers.

\begin{corollary}
Let $A$ be accretive, and assume that $A^k$ is accretive for some positive number $k\geq 2$. Then 
\[\omega^m(A)\geq \omega(A^m) ,\]
for $2\leq m\leq k.$
\end{corollary}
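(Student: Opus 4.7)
The plan is to flip the desired inequality and reduce it, via the substitution $B:=A^{m}$ and exponent $t:=1/m$, to the already-proven reverse fractional power inequality of Corollary~\ref{cor_accretive_1/2} (and its predecessor Proposition~\ref{Prop_reci} for the boundary case).

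First, I would fix $m\in[2,k]$ and use the observation immediately preceding the corollary to conclude that $B=A^{m}$ is accretive. Since the spectrum of $B$ then avoids $(-\infty,0]$, the principal branch defines $B^{1/m}$, and under the standing hypotheses the composition rule $(A^{m})^{1/m}=A$ holds. This identification rests on the fact that every eigenvalue $\lambda$ of $A$ satisfies $m\,\mathrm{Arg}(\lambda)\in(-\pi,\pi)$, which is forced by the hypothesis that $A^{m}$ is accretive; I expect this to be the main delicate point of the argument.

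For the generic range $m\in(2,k]$, the exponent $t=1/m$ lies in $(0,1/2)$, so Corollary~\ref{cor_accretive_1/2} applied to the accretive matrix $B$ yields
\[
\omega\!\left(B^{1/m}\right)\geq \omega^{1/m}(B),\qquad\text{that is,}\qquad \omega(A)\geq \omega^{1/m}(A^{m}).
\]
Raising both sides to the $m$-th power produces the desired inequality $\omega^{m}(A)\geq \omega(A^{m})$.

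For the boundary case $m=2$, Corollary~\ref{cor_accretive_1/2} does not directly apply, since $t=1/2$ is excluded from its strict range. I would instead invoke Proposition~\ref{Prop_reci} with $k=2$ on the accretive matrix $B=A^{2}$ to obtain $\omega(B^{1/2})\geq \omega^{1/2}(B)$, i.e., $\omega(A)\geq \omega^{1/2}(A^{2})$; squaring then gives $\omega^{2}(A)\geq \omega(A^{2})$, completing the argument. Apart from the principal-branch identification $(A^{m})^{1/m}=A$, every step of the proof is a direct substitution into the inequalities already established.
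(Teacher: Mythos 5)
Your proposal is correct and follows essentially the same route as the paper: apply Corollary~\ref{cor_accretive_1/2} to $B=A^{m}$ with exponent $t=1/m$ and use the accretivity of $A^{m}$ noted before the statement. You are in fact slightly more careful than the paper, which invokes its corollary at $1/m\leq 1/2$ even though it is stated for $t<1/2$; your handling of the boundary case $m=2$ via Proposition~\ref{Prop_reci}, and your flagging of the principal-branch identity $(A^{m})^{1/m}=A$, patch exactly the points the paper leaves implicit.
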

\begin{proof}
Notice that since $A^k$ is accretive, and $m\leq k$, $A^m$ is necessarily accretive. Therefore, we may apply Corollary \ref{cor_accretive_1/2} with $A^{m}$ instead of $A$, and with $\frac{1}{m}$ instead of $t$, noting that $\frac{1}{m}\leq \frac{1}{2}.$ This implies
\begin{align*}
\omega(A)&=\omega\left(\left(A^m\right)^{\frac{1}{m}}\right)\geq \omega^{\frac{1}{m}}(A^m),
\end{align*}
which is the desired result.
\end{proof}

\section*{Acknowledgment}
The authors would like to thank Professor Stephen Drury of McGill University for his valuable and insightful discussion on the problem. 
\subsection*{Declarations}
\begin{itemize}
\item {\bf{Availability of data and materials}}: Not applicable.
\item {\bf{Competing interests}}: The authors declare that they have no competing interests.
\item {\bf{Funding}}: Not applicable.
\item {\bf{Authors' contributions}}: Authors declare that they have contributed equally to this paper. All authors have read and approved this version.
\end{itemize}


\end{document}